\DeclareMathOperator {\Cay} {{\rm Cay}}
\newtheorem{theorem}{Theorem}
\newtheorem{lemma}[theorem]{Lemma}
\newtheorem{problem}[theorem]{Problem}
\newcommand{\diam}{{\rm diam}}
\date{}
\begin{document}

\title{On $\ell$-distance-balancedness of cubic Cayley graphs of dihedral groups}

\maketitle

\vspace{-19mm}

\begin{center}
\author{Gang Ma, \and Jianfeng Wang\footnote{Corresponding author}, \and Guang Li}\\[4mm]

\footnotesize School of Mathematics and Statistics, Shandong University of Technology, Zibo, China\\[2mm]
{\tt math$\_$magang@163.com (G.~Ma)\\
jfwang@sdut.edu.cn (J.F.~Wang)\\
lig@sdut.edu.cn (G.~Li)
}\\
\medskip
\end{center}

\begin{center}
\author{Sandi Klav\v{z}ar}\\[4mm]

\footnotesize Faculty of Mathematics and Physics, University of Ljubljana, Ljubljana, Slovenia\\[2mm]
\footnotesize  Institute of Mathematics, Physics and Mechanics, Ljubljana, Slovenia \\[2mm]
\footnotesize Faculty of Natural Sciences and Mathematics, University of Maribor, Slovenia \\[2mm]
{\tt sandi.klavzar@fmf.uni-lj.si}\\
\medskip

\end{center}

\begin{abstract}
A connected graph $\Gamma$ of diameter ${\rm diam}(\Gamma) \ge \ell$  is $\ell$-distance-balanced if $|W_{xy}(\Gamma)|=|W_{yx}(\Gamma)|$ for every $x,y\in V(\Gamma)$ with $d_{\Gamma}(x,y)=\ell$, where $W_{xy}(\Gamma)$ is the set of vertices of $\Gamma$ that are closer to $x$ than to $y$.
$\Gamma$ is said to be highly distance-balanced if it is $\ell$-distance-balanced for every $\ell\in [\diam(\Gamma)]$. It is proved that every cubic Cayley graph whose generating set is one of $\{a,a^{n-1},ba^r\}$ and $\{a^k,a^{n-k},ba^t\}$ is highly distance-balanced. This partially solves a problem posed by Miklavi\v{c} and \v{S}parl.
\end{abstract}

\noindent {\bf Key words:} Distance-balanced graph; $\ell$-distance-balanced graph; Cayley graph; Dihedral group

\medskip\noindent
{\bf AMS Subj.\ Class:} 05C12, 05C25

%%%%%%%%%%%%%%%%%%%%%%%%%%%%%%
\section{Introduction}
\label{S:intro}
%%%%%%%%%%%%%%%%%%%%%%%%%%%%%%

If $\Gamma= (V(\Gamma), E(\Gamma))$ is a connected graph and $x, y\in V(\Gamma)$, then the {\it distance} $d_{\Gamma}(x, y)$ between $x$ and $y$ is the number of edges on a shortest $(x,y)$-path. The diameter $\diam(\Gamma)$ of $\Gamma$ is the maximum distance between its vertices. The set $W_{xy}(\Gamma)$ contains the vertices that are closer to $x$ than to $y$, that is,
$W_{xy}(\Gamma)=\{w\in V(\Gamma):\ d_{\Gamma}(w,x) < d_{\Gamma}(w,y)\}$.
Vertices $x$ and $y$ are {\em balanced} if $|W_{xy}(\Gamma)| = |W_{yx}(\Gamma)|$.  For an integer $\ell \in [\diam(\Gamma)] = \{1,2,\ldots, \diam(\Gamma)\}$ we say that $\Gamma$ is $\ell$-{\em distance-balanced} if each pair of vertices $x,y\in V(\Gamma)$ with $d_{\Gamma}(x,y) = \ell$ is balanced. $\Gamma$ is said to be {\em highly distance-balanced} if it is $\ell$-distance-balanced for every $\ell\in [\diam(\Gamma)]$. $1$-distance-balanced graphs are simply called {\em distance-balanced} graphs.

Distance-balanced graphs were first considered by Handa~\cite{Handa:1999} back in 1999, while the term ``distance-balanced'' was proposed a decade later by Jerebic et al.\ in~\cite{Jerebic:2008}. The latter paper was the trigger for intensive research of distance-balanced graphs, see~\cite{Abiad:2016, Balakrishnan:2014, Balakrishnan:2009, Cabello:2011, cavaleri-2020, fernardes-2022, Ilic:2010, Kutnar:2006, Kutnar:2009, Kutnar:2014, Miklavic:2012, YangR:2009}. Moreover, distance-balanced graphs have motivated the introduction of the hitherto much-researched Mostar index~\cite{ali-2021, doslic-2018} and distance-unbalancedness of graphs~\cite{kr-2021, miklavic-2021, xu-2022}. In this context, distance-balanced graphs are the graphs with the Mostar index equal to 0. Distance-balanced graphs also coincide with  ``transmission regular graphs,'' see the survey~\cite{aouchiche-2024} on the latter class of graphs.

In~\cite{Frelih:2014}, Frelih generalized distance-balanced graphs to $\ell$-distance-balanced graphs.
%The special case of $\ell=2$ has been studied in detail in~\cite{Frelih:2018}. Among other results it was demonstrated that there exist $2$-distance-balanced graphs that are not $1$-distance-balanced. $2$-distance-balanced graphs that are not $2$-connected were characterized as well as $2$-distance-balanced Cartesian and lexicographic products. In this direction, $\ell$-distance-balanced corona products and lexicographic products were investigated in~\cite{Jerebic:2021}.
Since then many researchers studied $\ell$-distance-balancedness of graphs from different aspects,
see~\cite{Frelih:2018,Jerebic:2021,MaG:2024,MaG:2024+,Miklavic:2018}. We emphasize that in~\cite{Miklavic:2018} some general results on $\ell$-distance balanced graphs are obtained and $\ell$-distance-balancedness of cubic graphs and graphs of diameter at most $3$ are studied.

The main object of our interest in this paper is Cayley graphs, so let's recapitulate the definitions. Let $G$ be a finite group and let $S\subseteq G$ be a {\em generating subset} with $S=S^{-1}$ and not containing the identity. Then the {\em Cayley graph} $\Cay(G;S)$ has the vertex set $G$, and $g\in G$ is adjacent to $h\in G$ whenever $g^{-1}h\in S$.

Kutnar et al.~\cite{Kutnar:2006} proved that every vertex-transitive graph is strongly distance-balanced. We do not give the definition of strongly distance-balancedness here, it suffices to state that, as the name suggests, every strongly distance-balanced graphs is distance-balanced. Since Cayley graphs are vertex-transitive, every Cayley graphs is hence distance-balanced. Moreover, Miklavi\v{c} and \v{S}parl~\cite{Miklavic:2018} proved that every Cayley graph of an abelian group is highly distance-balanced.

Given the above results, the question naturally arises as to what the situation is with distance-balancedness of Cayley graphs of non-abelian groups. In particular, Miklavi\v{c} and \v{S}parl posed the problem below, for which we recall that $D_{n}=C_n\rtimes C_2=\langle a,b\mid a^n=b^2=1, bab=a^{-1}\rangle$ is the {\em dihedral group} of order $2n$,
where $C_n=\langle a\rangle$ is a normal cyclic subgroup of $D_{n}$ of order $n$. Note that $D_{n}=\{1,a,\ldots,a^{n-1},b,ba,\ldots,ba^{n-1}\}$.

\begin{problem} {\rm \cite[Problem 6.7]{Miklavic:2018}}
\label{P:Ca-dihedral}
For each Cayley graph $\mathit\Gamma$ of a dihedral group determine all $\ell\ge 1$ such that $\mathit\Gamma$ is $\ell$-distance-balanced.
If this is too difficult in general, consider this problem for cubic Cayley graphs of dihedral groups.
\end{problem}

Our two main results which partially answer Problem~\ref{P:Ca-dihedral} read as follows.

\begin{theorem}\label{T:Cayley-cubic1}
If $S_1=\{a,a^{n-1},ba^r\}$, where $0\le r\le n-1$, then $\Cay(D_{n};S_1)$
is highly distance-balanced.
\end{theorem}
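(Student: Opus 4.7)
My plan rests on a structural observation that is arguably the heart of the matter: regardless of the shift parameter $r$, the Cayley graph $\Cay(D_{n};S_1)$ is isomorphic to the $n$-prism $C_{n}\,\square\,K_{2}$. Writing $u_{i}=a^{i}$ and $v_{i}=ba^{i}$ for $i\in\mathbb{Z}_{n}$, and using the defining relation $a^{i}b=ba^{-i}$, one computes $u_{i}\cdot ba^{r}=v_{r-i}$ and $v_{i}\cdot ba^{r}=u_{r-i}$. Hence the generator $ba^{r}$ contributes the perfect matching $\{u_{i},v_{r-i}\}$ between two disjoint $n$-cycles---one on the $u_{i}$'s and one on the $v_{i}$'s---coming from the generators $a$ and $a^{n-1}$.

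To exhibit the isomorphism I would propose the bijection
\[
\phi:\Cay(D_{n};S_{1})\longrightarrow\Gamma':=\Cay(\mathbb{Z}_{n}\times\mathbb{Z}_{2};\{(\pm 1,0),(0,1)\})
\]
given by $\phi(u_{i})=(i,0)$ and $\phi(v_{i})=(r-i,1)$. The index reversal on the $v$-coset is exactly what absorbs the ``twist'' coming from $bab=a^{-1}$. A direct check shows that $\phi$ sends each of the three edge types bijectively to the corresponding edge type in $\Gamma'$: the $u$-cycle and the $v$-cycle both use $(\pm 1,0)$ (the reversal only swaps forward and backward edges), while the matching edges $u_{i}\sim v_{r-i}$ are sent to $(i,0)\sim(i,1)$, using $(0,1)$.

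The theorem then follows immediately from two facts recalled in the introduction: by the Miklav\v{c}i\v{c}--\v{S}parl result, every Cayley graph of an abelian group is highly distance-balanced, hence $\Gamma'$ is; and being highly distance-balanced is clearly a graph isomorphism invariant. I do not anticipate any serious obstacle, because the whole argument collapses to writing down $\phi$ and verifying the edge correspondence, which is purely computational. The one conceptual point worth flagging is why $r$ can always be absorbed: since $\langle a\rangle$ acts freely and transitively on each coset, there is enough relabeling room to straighten out the twisted matching into the parallel one of a prism. I would not expect this shortcut to extend to the companion theorem about $\{a^{k},a^{n-k},ba^{t}\}$, since when $\gcd(k,n)>1$ the element $a^{k}$ no longer produces a single $n$-cycle on each coset and the underlying graph is genuinely more complicated.
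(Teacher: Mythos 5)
Your proof is correct, and it takes a genuinely different and much shorter route than the paper. The paper argues directly: after reducing to the pairs $(1,a^s)$ and $(1,ba^s)$ by vertex-transitivity, it decomposes $\Gamma$ into the two induced $n$-cycles on $\{a^i\}$ and $\{ba^i\}$ plus the matching $a^i\sim ba^{r-i}$, and then runs a lengthy case analysis (on $s=r$, $s=r\pm 1$, and four subcases for the remaining $s$, split further by the parity of $n$) that constructs explicit shortest paths to exhibit a bijection between $W_{1(ba^s)}(\Gamma)$ and $W_{(ba^s)1}(\Gamma)$. You instead observe that the relabeling $a^i\mapsto (i,0)$, $ba^i\mapsto (r-i,1)$ is an isomorphism onto the prism $C_n\,\square\,K_2=\Cay(\mathbb{Z}_n\times\mathbb{Z}_2;\{(\pm 1,0),(0,1)\})$ --- your verification of the three edge types is exactly right, the map $i\mapsto r-i$ being a reflection of $C_n$ is what straightens the twisted matching --- and then invoke the Miklavi\v{c}--\v{S}parl theorem that every Cayley graph of an abelian group is highly distance-balanced, a result the paper itself quotes in the introduction; since being $\ell$-distance-balanced is defined purely in terms of distances, it is an isomorphism invariant, so the conclusion follows. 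Your argument is conceptually cleaner and explains \emph{why} the theorem holds, at the cost of outsourcing the work to the abelian result, whereas the paper's computation is self-contained and yields explicit descriptions of the sets $W_{xy}$. One small correction to your closing remark: Theorem~\ref{T:Cayley-cubic2} assumes $(k,n)=1$, so $a^k$ still generates the cyclic subgroup and your prism argument applies verbatim with $a$ replaced by $a^k$; indeed the paper proves that theorem by precisely such a relabeling isomorphism back to Theorem~\ref{T:Cayley-cubic1}. The genuinely harder cases you anticipate are the remaining generating sets $\{a^{n/2},ba^{k_1},ba^{k_2}\}$ and $\{ba^{k_1},ba^{k_2},ba^{k_3}\}$, which the paper leaves open.
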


\begin{theorem}\label{T:Cayley-cubic2}
If $S_2=\{a^k,a^{n-k},ba^t\}$, where $1\le k<n/2$, $(k,n)=1$, and $0\le t\le n-1$, then $\Cay(D_{n};S_2)$ is highly distance-balanced.
\end{theorem}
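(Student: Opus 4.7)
The plan is to deduce Theorem~\ref{T:Cayley-cubic2} directly from Theorem~\ref{T:Cayley-cubic1}. The idea is to exhibit a group automorphism of $D_n$ that carries the generating set $S_1$ onto $S_2$; any such automorphism automatically induces a graph isomorphism between the two Cayley graphs, and being highly distance-balanced is preserved under graph isomorphism.

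I would exploit the coprimality hypothesis $(k,n)=1$, which guarantees that $a^k$ has order $n$ and hence generates $\langle a\rangle$. Consequently the assignment $a\mapsto a^k$, $b\mapsto b$ extends to a group automorphism $\psi\colon D_n\to D_n$: the defining dihedral relation is preserved by the one-line check
\[
\psi(b)\psi(a)\psi(b)^{-1}=ba^kb^{-1}=(bab^{-1})^k=a^{-k}=\psi(a^{-1}).
\]
Because every group automorphism of $G$ sends $\Cay(G;S)$ isomorphically onto $\Cay(G;\psi(S))$ via the same map on vertices, it then suffices to show that $\psi(S_1)=S_2$ for a suitable choice of $r$.

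Given $S_2=\{a^k,a^{n-k},ba^t\}$, I would let $r$ be the unique element of $\{0,1,\ldots,n-1\}$ satisfying $kr\equiv t\pmod n$, which exists precisely because $(k,n)=1$. Then
\[
\psi(S_1)=\psi(\{a,a^{n-1},ba^r\})=\{a^k,a^{n-k},ba^{kr}\}=\{a^k,a^{n-k},ba^t\}=S_2,
\]
so $\Cay(D_n;S_2)\cong\Cay(D_n;S_1)$. Theorem~\ref{T:Cayley-cubic1} asserts that the latter is highly distance-balanced, and transferring along the isomorphism gives the desired conclusion.

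The main ``obstacle'' is really only conceptual: one must recognize that the coprimality condition is exactly what enables the automorphism $a\mapsto a^k$, after which the argument becomes mechanical. All substantive work on the distance structure of the prism-like Cayley graphs of $D_n$ has already been carried out inside the proof of Theorem~\ref{T:Cayley-cubic1}; Theorem~\ref{T:Cayley-cubic2} is an essentially immediate symmetry-enhanced extension.
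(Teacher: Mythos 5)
Your proposal is correct and is essentially the paper's own argument: the paper constructs exactly the same isomorphism (its explicit vertex bijection $\theta(a^i)=a^{ik}$, $\theta(ba^i)=ba^{ik}$ with $kr\equiv t \pmod n$ is precisely your automorphism $a\mapsto a^k$, $b\mapsto b$) and likewise reduces to Theorem~\ref{T:Cayley-cubic1}. The only difference is presentational: you invoke the general fact that a group automorphism induces a Cayley graph isomorphism, while the paper verifies edge-preservation directly.
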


The Cayley graphs $\Cay(D_{6};S_1)$ where $S_1= \{a, a^{5}, ba^2\}$, and $\Cay(D_{7};S_2)$ where $S_2=\{a^3, a^{4}, ba^5\}$, are shown in Fig.~\ref{F:Cay-n6-152}.
Theorems~\ref{T:Cayley-cubic1} and~\ref{T:Cayley-cubic2} are proved in Section~\ref{S:Cayley-cubic-proof}, while in the concluding section we list some open problems.

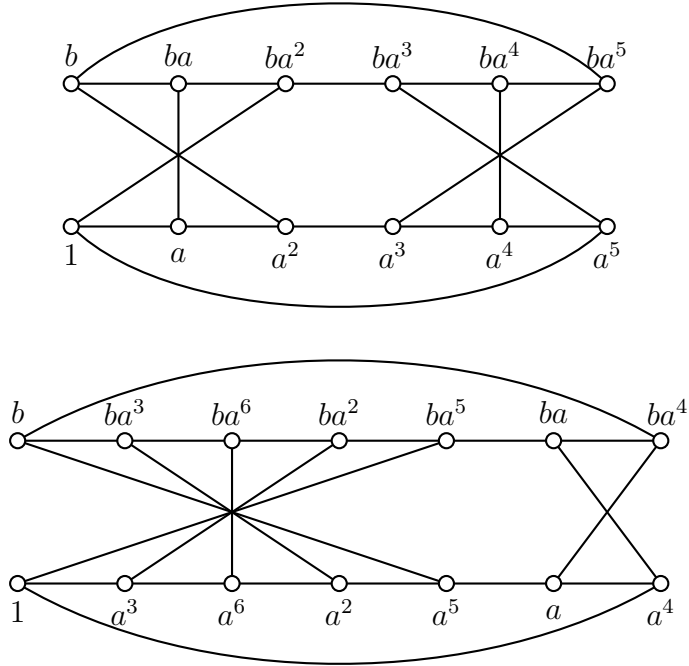
\begin{figure}[ht!]
\begin{center}
\begin{tikzpicture}[scale=0.95,style=thick,x=1cm,y=1cm]
\def\vr{3pt}
\begin{scope}[xshift=0cm, yshift=0cm] % \Cay(D_{6},S_1)
\coordinate(x1) at (0,0);
\coordinate(x2) at (1.5,0);
\coordinate(x3) at (3,0);
\coordinate(x4) at (4.5,0);
\coordinate(x5) at (6,0);
\coordinate(x6) at (7.5,0);
\coordinate(y1) at (0,2);
\coordinate(y2) at (1.5,2);
\coordinate(y3) at (3,2);
\coordinate(y4) at (4.5,2);
\coordinate(y5) at (6,2);
\coordinate(y6) at (7.5,2);
% \edges		
\draw (x1) -- (x6);
\draw (y1) -- (y6);
\draw (x1) -- (y3);
\draw (x2) -- (y2);
\draw (x3) -- (y1);
\draw (x4) -- (y6);
\draw (x5) -- (y5);
\draw (x6) -- (y4);
\draw (x1) .. controls (1.5,-1.5) and (6.0,-1.5) .. (x6);
\draw (y1) .. controls (1.5,3.5) and (6.0,3.5) .. (y6);
%  vertices
\foreach \i in {1,2,3,4,5,6}
{
\draw(x\i)[fill=white] circle(\vr);
\draw(y\i)[fill=white] circle(\vr);
}
% text
\node at (0,-0.4) {$1$};
\node at (1.5,-0.4) {$a$};
\node at (3.0,-0.4) {$a^2$};
\node at (4.5,-0.4) {$a^3$};
\node at (6.0,-0.4) {$a^4$};
\node at (7.5,-0.4) {$a^5$};
\node at (0,2.4) {$b$};
\node at (1.5,2.4) {$ba$};
\node at (3.0,2.4) {$ba^2$};
\node at (4.5,2.4) {$ba^3$};
\node at (6.0,2.4) {$ba^4$};
\node at (7.5,2.4) {$ba^5$};
\end{scope}

\begin{scope}[xshift=-0.75cm, yshift=-5cm] % S42 1
\coordinate(x1) at (0,0);
\coordinate(x2) at (1.5,0);
\coordinate(x3) at (3,0);
\coordinate(x4) at (4.5,0);
\coordinate(x5) at (6,0);
\coordinate(x6) at (7.5,0);
\coordinate(x7) at (9,0);
\coordinate(y1) at (0,2);
\coordinate(y2) at (1.5,2);
\coordinate(y3) at (3,2);
\coordinate(y4) at (4.5,2);
\coordinate(y5) at (6,2);
\coordinate(y6) at (7.5,2);
\coordinate(y7) at (9,2);
% \edges		
\draw (x1) -- (x7);
\draw (y1) -- (y7);
\draw (x1) -- (y5);
\draw (x2) -- (y4);
\draw (x3) -- (y3);
\draw (x4) -- (y2);
\draw (x5) -- (y1);
\draw (x6) -- (y7);
\draw (x7) -- (y6);
\draw (x1) .. controls (2.5,-1.5) and (6.5,-1.5) .. (x7);
\draw (y1) .. controls (2.5,3.5) and (6.5,3.5) .. (y7);
%  vertices
\foreach \i in {1,2,3,4,5,6,7}
{
\draw(x\i)[fill=white] circle(\vr);
\draw(y\i)[fill=white] circle(\vr);
}
% text
\node at (0,-0.4) {$1$};
\node at (1.5,-0.4) {$a^3$};
\node at (3.0,-0.4) {$a^6$};
\node at (4.5,-0.4) {$a^2$};
\node at (6.0,-0.4) {$a^5$};
\node at (7.5,-0.4) {$a$};
\node at (9.0,-0.4) {$a^4$};
\node at (0,2.4) {$b$};
\node at (1.5,2.4) {$ba^3$};
\node at (3.0,2.4) {$ba^6$};
\node at (4.5,2.4) {$ba^2$};
\node at (6.0,2.4) {$ba^5$};
\node at (7.5,2.4) {$ba$};
\node at (9.1,2.4) {$ba^4$};
\end{scope}

\end{tikzpicture}
\caption{$\Cay(D_{6};S_1)$ where $S_1=\{a,a^{5},ba^2\}$ (above); $\Cay(D_{7};S_2)$ where $S_2=\{a^3,a^{4},ba^5\}$ (below).}
\label{F:Cay-n6-152}
\end{center}
\end{figure}

%%%%%%%%%%%%%%%%%%%%%%%%%%%%%%%%
\section{Proof of Theorems~\ref{T:Cayley-cubic1} and~\ref{T:Cayley-cubic2}}
\label{S:Cayley-cubic-proof}
%%%%%%%%%%%%%%%%%%%%%%%%%%%%%%%%

For a non-negative integer $k$, we will use the notations $[k] = \{1,\ldots, k\}$ and $[k]_0 = \{0,\ldots, k-1\}$. Before proving Theorem~\ref{T:Cayley-cubic1}, there are three technical lemmas.

\begin{lemma}\label{L:dihe-1}
In $D_{n}$, if $i\in [n]_0$, then $(a^i)^{-1}=a^{n-i}$, $(ba^i)^{-1}=ba^i$,
and $ba^ib=a^{-i}$.
\end{lemma}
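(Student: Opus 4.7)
The plan is to derive all three identities directly from the defining presentation $D_n = \langle a, b \mid a^n = b^2 = 1,\, bab = a^{-1}\rangle$. The single key auxiliary fact I would establish first is the ``commutation'' identity $ba^i = a^{-i}b$, which controls how $b$ moves past an arbitrary power of $a$; once this is in hand, the second and third assertions of the lemma fall out in one line each.

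First I would handle $(a^i)^{-1} = a^{n-i}$: this is immediate, since $a^i \cdot a^{n-i} = a^n = 1$. Next, starting from $bab = a^{-1}$ and multiplying on the right by $b$ (using $b^2 = 1$), I obtain $ba = a^{-1}b$. A short induction on $i$ then extends this to $ba^i = a^{-i}b$ for every $i \in [n]_0$: assuming $ba^{i-1} = a^{-(i-1)}b$, one computes $ba^i = (ba^{i-1})a = a^{-(i-1)}(ba) = a^{-(i-1)} a^{-1} b = a^{-i}b$.

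With $ba^i = a^{-i}b$ established, the third identity follows by multiplying on the right by $b$: $ba^i b = a^{-i} b^2 = a^{-i}$. For the middle identity $(ba^i)^{-1} = ba^i$, it suffices to verify that $ba^i$ is an involution, and indeed
\[
(ba^i)(ba^i) = (a^{-i}b)(ba^i) = a^{-i} b^2 a^i = a^{-i} a^i = 1,
\]
where I substituted $ba^i = a^{-i}b$ in the first factor.

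There is no substantive obstacle; this is a routine manipulation of the dihedral presentation. The only organisational point is to prove the auxiliary identity $ba^i = a^{-i}b$ by induction before attempting the second and third claims, so that each of the latter reduces to a single application of $b^2 = 1$.
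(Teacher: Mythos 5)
Your proof is correct. It differs from the paper's in a small but genuine way: you first establish the commutation identity $ba^i = a^{-i}b$ by induction on $i$, and then read off both $ba^ib = a^{-i}$ and $(ba^i)^2 = 1$ as one-line consequences. The paper instead proves the involution property directly, telescoping $(ba^i)(ba^i) = ba^{i-1}\,aba\,a^{i-1} = ba^{i-1}\,b\,a^{i-1} = \cdots = baba = 1$ using $aba = b$, and only afterwards deduces $ba^ib = a^{-i}$ from $(ba^i)^{-1} = ba^i$; so the logical order of the second and third identities is reversed relative to yours. Your route isolates the standard dihedral relation $ba^i = a^{-i}b$ as the reusable workhorse, which is arguably the cleaner organisation and makes each remaining claim independent of the others; the paper's telescoping argument avoids stating that relation explicitly but is essentially the same induction in disguise. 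Both arguments are complete and equally elementary.
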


\begin{proof}
Because $a^n=b^2=1$, we have $(a^i)^{-1}=a^{-i}=a^{n-i}$ and $b^{-1}=b$.

Since $bab=a^{-1}$, we have $aba=b$ and $baba=1$. Hence $(ba^i)(ba^i)=ba^{i-1}abaa^{i-1}=ba^{i-1}ba^{i-1}=\cdots=baba=1$. So $(ba^i)^{-1}=ba^i$.

And since $(ba^i)^{-1}=ba^i$, we get $ba^iba^i=1$ and $ba^ib=a^{-i}$.
\end{proof}

\begin{lemma}\label{L:Cay-cubic1}
If $S_1=\{a,a^{n-1},ba^r\}$, $r\in [n]_0$, and if $i\in [n]_0$, then in $\Cay(D_{n};S_1)$,
\begin{enumerate}
\item[(1)] $a^i$ is adjacent to $a^{i-1}$, $a^{i+1}$, and $ba^{r-i}$; and
\item[(2)] $ba^i$ is adjacent to $ba^{i-1}$, $ba^{i+1}$, and $a^{r-i}$.
\end{enumerate}
\end{lemma}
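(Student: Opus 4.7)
The lemma is a direct verification from the Cayley graph definition: in $\Cay(D_n;S_1)$, two elements $g,h\in D_n$ are adjacent exactly when $g^{-1}h\in S_1=\{a,a^{n-1},ba^r\}$. My plan is simply to take each of the six pairs $(g,h)$ claimed and show that $g^{-1}h$ lies in $S_1$, using nothing beyond Lemma~\ref{L:dihe-1}. Since $|S_1|=3$, producing one element of $S_1$ for each claimed neighbor suffices.

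For part~(1), with $g=a^i$, the two ``cyclic'' neighbors are immediate: $(a^i)^{-1}a^{i\pm 1}=a^{\pm 1}$, which equals $a$ or $a^{n-1}$ after invoking $(a)^{-1}=a^{n-1}$ from Lemma~\ref{L:dihe-1}. The only computation requiring care is $(a^i)^{-1}(ba^{r-i})=a^{n-i}\,b\,a^{r-i}$, where one has to push the power of $a$ past $b$. The identity I will use is $a^j b=b a^{-j}$, obtained from $ba^jb=a^{-j}$ in Lemma~\ref{L:dihe-1} by right-multiplication by $b$. Applying this collapses the product to $ba^r\in S_1$.

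For part~(2), with $g=ba^i$, the key starting point is the involution identity $(ba^i)^{-1}=ba^i$ from Lemma~\ref{L:dihe-1}. Then the two ``cyclic'' neighbors give $(ba^i)(ba^{i\pm 1})$; here the factor $ba^ib=a^{-i}$ (again Lemma~\ref{L:dihe-1}) yields $a^{\pm 1}$, i.e.\ $a$ or $a^{n-1}$. For the third neighbor I compute $(ba^i)(a^{r-i})=ba^r$ directly.

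I expect no real obstacle. The entire proof is routine algebra in $D_n$; the one small ``move'' worth isolating is the commutation rule $a^j b=b a^{-j}$, which is then applied uniformly. The remaining bookkeeping is only reducing exponents modulo $n$ via $a^{-j}=a^{n-j}$.
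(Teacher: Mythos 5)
Your proposal is correct and follows essentially the same route as the paper: a direct check that each claimed quotient lies in $S_1$, using the identities of Lemma~\ref{L:dihe-1}. The only cosmetic difference is that you compute $g^{-1}h$ while the paper computes $h^{-1}g$ (thereby exploiting $(ba^{r-i})^{-1}=ba^{r-i}$ instead of your commutation rule $a^jb=ba^{-j}$); since $S_1=S_1^{-1}$, the two are interchangeable.
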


\begin{proof}
Because (i) $(a^{i-1})^{-1}a^i=a\in S_1$, (ii) $(a^{i+1})^{-1}a^i=a^{-1}=a^{n-1}\in S_1$, and (iii) $(ba^{r-i})^{-1}a^i=ba^{r-i}a^i=ba^r\in S_1$, we get that
$a^i$ is adjacent to (i) $a^{i-1}$, (ii) $a^{i+1}$, and (iii) $ba^{r-i}$.

By Lemma~\ref{L:dihe-1} we have $ba^ib=a^{-i}$. Then because (i)
$(ba^{i-1})^{-1}(ba^i)=ba^{i-1}ba^i=a^{-(i-1)}a^i=a\in S_1$, (ii) $(ba^{i+1})^{-1}(ba^i)=ba^{i+1}ba^i=a^{-(i+1)}a^i=a^{-1}=a^{n-1}\in S_1$,
and (iii) $(a^{r-i})^{-1}(ba^i)=(ba^{r-i}b)(ba^i)=ba^r\in S_1$, we can conclude that
$ba^i$ is adjacent to (i) $ba^{i-1}$, (ii) $ba^{i+1}$, and (iii) $a^{r-i}$.
\end{proof}

\begin{lemma}\label{L:Cay-cubic2}
If $S_2 = \{a^k,a^{n-k},b(a^k)^r\}$, $1\le k<n/2$, $(k,n)=1$, $r\in [n]_0$, and if $i\in [n]_0$, then in $\Cay(D_{n};S_2)$,
\begin{enumerate}
\item[(1)] $a^{ik}$ is adjacent to $a^{(i-1)k}$, $a^{(i+1)k}$, and $ba^{(r-i)k}$; and
\item[(2)] $ba^{ik}$ is adjacent to $ba^{(i-1)k}$, $ba^{(i+1)k}$, and $a^{(r-i)k}$.
\end{enumerate}
\end{lemma}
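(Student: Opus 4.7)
The plan is to prove Lemma~\ref{L:Cay-cubic2} by direct verification, exactly mirroring the proof of Lemma~\ref{L:Cay-cubic1}. The key structural observation is that substituting $a^k$ for $a$ throughout Lemma~\ref{L:Cay-cubic1} recovers the present statement almost verbatim, with the third generator $b(a^k)^r = ba^{kr}$ now playing the role formerly played by $ba^r$. So for each claimed edge between vertices $g$ and $h$, it suffices to compute the quotient $g^{-1}h$ in $D_n$ and confirm that the outcome belongs to $S_2 = \{a^k, a^{n-k}, b(a^k)^r\}$.

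For part (1), I would compute the three products $(a^{(i-1)k})^{-1}a^{ik}$, $(a^{(i+1)k})^{-1}a^{ik}$, and $(ba^{(r-i)k})^{-1}a^{ik}$. The first two collapse trivially to $a^k$ and $a^{-k} = a^{n-k}$, respectively. For the third, I would invoke the self-inverse identity $(ba^j)^{-1} = ba^j$ from Lemma~\ref{L:dihe-1}, yielding $(ba^{(r-i)k})^{-1}a^{ik} = ba^{(r-i)k}a^{ik} = ba^{rk} = b(a^k)^r \in S_2$.

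For part (2), the same three adjacencies are checked, but now each quotient contains two occurrences of $b$ that must be reconciled using the relation $ba^jb = a^{-j}$ from Lemma~\ref{L:dihe-1}. Specifically, $(ba^{(i\pm 1)k})^{-1}(ba^{ik}) = (ba^{(i\pm 1)k}b)\,a^{ik} = a^{-(i\pm 1)k}a^{ik} = a^{\mp k}$, which lies in $S_2$; and $(a^{(r-i)k})^{-1}(ba^{ik}) = a^{-(r-i)k}ba^{ik} = ba^{(r-i)k}a^{ik} = b(a^k)^r$, where the middle step relies on the identity $a^{-j}b = ba^j$, obtained by post-multiplying $ba^jb = a^{-j}$ by $b$.

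Because each of the six verifications is a short calculation in $D_n$, there is no real obstacle to the argument. The only minor subtlety worth flagging is that the coprimality hypothesis $(k,n) = 1$ is not used at the local level at all; it guarantees that $a^k$ generates $\langle a\rangle$, which is needed elsewhere in the paper to ensure that $\Cay(D_n; S_2)$ is connected and to reindex the vertices coherently, but it plays no role in establishing the three specific adjacencies incident to any given vertex.
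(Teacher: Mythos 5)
Your proposal is correct and follows essentially the same route as the paper: both verify each of the six adjacencies by computing $g^{-1}h$ and checking membership in $S_2$, using the identities $(ba^j)^{-1}=ba^j$ and $ba^jb=a^{-j}$ from Lemma~\ref{L:dihe-1}. Your side remark that $(k,n)=1$ is not needed for these local adjacency checks (only for connectivity and the global reindexing used later) is accurate and consistent with the paper.
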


\begin{proof}
Because
(i) $(a^{(i-1)k})^{-1}a^{ik}=a^k\in S_2$,
(ii) $(a^{(i+1)k})^{-1}a^{ik}=a^{-k}=a^{n-k}\in S_2$, and
(iii) $(ba^{(r-i)k})^{-1}a^{ik}=(ba^{(r-i)k})a^{ik}=ba^{rk}\in S_2$,
we get that
$a^{ik}$ is adjacent to (i) $a^{(i-1)k}$, (ii) $a^{(i+1)k}$, and (iii) $ba^{(r-i)k}$.

Since $ba^ib=a^{-i}$ (Lemma~\ref{L:dihe-1}), the computations
(i) $(ba^{(i-1)k})^{-1}(ba^{ik})=ba^{(i-1)k}ba^{ik}=a^{-(i-1)k}a^{ik}=a^k\in S_2$,
(ii) $(ba^{(i+1)k})^{-1}(ba^{ik})=ba^{(i+1)k}ba^{ik}=a^{-(i+1)k}a^{ik}=a^{-k}=a^{n-k}\in S_2$, and
(iii) $(a^{(r-i)k})^{-1}(ba^{ik})=(ba^{(r-i)k}b)(ba^{ik})=ba^{rk}\in S_2$ imply that
$ba^{ik}$ is adjacent to (i) $ba^{(i-1)k}$, (ii) $ba^{(i+1)k}$, and (iii) $a^{(r-i)k}$.
\end{proof}

Now it is time to prove Theorem~\ref{T:Cayley-cubic1}.

\begin{proof}[Proof of Theorem~\ref{T:Cayley-cubic1}]
Set $\Gamma=\Cay(D_{n};S_1)$. Because Cayley graphs are vertex-transitive, it suffices to prove that $|W_{1a^s}(\Gamma)| = |W_{a^s1}(\Gamma)|$ for $s\in [n-1]$ and
$|W_{1(ba^s)}(\Gamma)| = |W_{(ba^s)1}(\Gamma)|$ for $s\in [n]_0$.

Let $V_1=\{1,a,\ldots,a^{n-1}\}$ and $V_2=\{b,ba,\ldots,ba^{n-1}\}$.
Let $\Gamma_1=\Gamma[V_1]$ and $\Gamma_2=\Gamma[V_2]$ be the subgraphs of $\Gamma$ respectively induced by $V_1$ and $V_2$. Then $\Gamma_1$ and $\Gamma_2$ are isomorphic to cycles. In the light of the foregoing, the proof will be complete after proving the following two claims.

\medskip\noindent
{\bf Claim 1}: $|W_{1a^s}(\Gamma)| = |W_{a^s1}(\Gamma)|$, $s\in [n-1]$. \\
Let $s\in [n-1]$ and note that $1(ba^r)\in E(\Gamma)$ and $(a^s)(ba^{r-s})\in E(\Gamma)$. Set
\begin{align*}
W_{1a^s}(\Gamma_1) & = \{a^i\in V_1:\ d_{\Gamma_1}(1,a^i)<d_{\Gamma_1}(a^s,a^i)\}\,, \\
W_{a^s1}(\Gamma_1) & = \{a^i\in V_1:\ d_{\Gamma_1}(1,a^i)>d_{\Gamma_1}(a^s,a^i)\}\,, \\
W_{(ba^r)(ba^{r-s})}(\Gamma_2) & = \{ba^i\in V_2:\ d_{\Gamma_2}(ba^r,ba^i)<d_{\Gamma_2}(ba^{r-s},ba^i)\}\,, \\
W_{(ba^{r-s})(ba^r)}(\Gamma_2) & = \{ba^i\in V_2:\ d_{\Gamma_2}(ba^r,ba^i)>d_{\Gamma_2}(ba^{r-s},ba^i)\}\,.
\end{align*}
Since $\Gamma_1$ and $\Gamma_2$ are cycles, then $|W_{1a^s}(\Gamma_1)| = |W_{a^s1}(\Gamma_1)|$ and $|W_{(ba^r)(ba^{r-s})}(\Gamma_2)| = |W_{(ba^{r-s})(ba^r)}(\Gamma_2)|$. Moreover, the structure of $\Gamma$ yields
\begin{align*}
W_{1a^s}(\Gamma) & = W_{1a^s}(\Gamma_1)\cup W_{(ba^r)(ba^{r-s})}(\Gamma_2)\,,\\
W_{a^s1}(\Gamma) & = W_{a^s1}(\Gamma_1)\cup W_{(ba^{r-s})(ba^r)}(\Gamma_2)\,.
\end{align*}
We can conclude that $|W_{1a^s}(\Gamma)| = |W_{a^s1}(\Gamma)|$ for $s\in [n-1]$.

\medskip\noindent
{\bf Claim 2}: $|W_{1(ba^s)}(\Gamma)| = |W_{(ba^s)1}(\Gamma)|$, $s\in [n]_0$. \\
The proof of Claim 2 is divided into four cases according to the value of $s$.

\medskip\noindent
{\bf Case 1}: $s=r$.\\
In this case, $W_{1(ba^r)}(\Gamma) = V_1$ and $W_{(ba^r)1}(\Gamma)=V_2$, therefore $|W_{1(ba^r)}(\Gamma)| = |W_{(ba^r)1}(\Gamma)|$ as required.

\medskip\noindent
{\bf Case 2}: $s=r-1$. \\
Assume first that $n$ is even. then $d_{\Gamma}(1,a^i)=d_{\Gamma}(ba^{r-1},a^i)$ when $1\le i\le n/2$,
and $d_{\Gamma}(1,ba^i)=d_{\Gamma}(ba^{r-1},ba^i)$ when $r\le i\le r+n/2-1$. Consequently,
\begin{align*}
W_{1(ba^{r-1})}(\Gamma) & = V_1-\{a,a^2,\ldots,a^{n/2}\}\,,\\
W_{(ba^{r-1})1}(\Gamma) & = V_2-\{ba^r,ba^{r+1},\ldots,ba^{r+n/2-1}\}\,.
\end{align*}
We can conclude that $|W_{1(ba^{r-1})}(\Gamma)|=|W_{(ba^{r-1})1}(\Gamma)|$.

Assume second that $n$ is odd. Then $d_{\Gamma}(1,a^i)=d_{\Gamma}(ba^{r-1},a^i)$ when $1\le i\le (n-1)/2$,
and $d_{\Gamma}(1,ba^i)=d_{\Gamma}(ba^{r-1},ba^i)$ when $r\le i\le r+(n-1)/2-1$. Hence
\begin{align*}
W_{1(ba^{r-1})}(\Gamma) & = V_1-\{a,a^2,\ldots,a^{(n-1)/2}\}\,,\\
W_{(ba^{r-1})1}(\Gamma) & = V_2-\{ba^r,ba^{r+1},\ldots,ba^{r+(n-1)/2-1}\}\,,
\end{align*}
and we have the required conclusion $|W_{1(ba^{r-1})}(\Gamma)|=|W_{(ba^{r-1})1}(\Gamma)|$.

\medskip\noindent
{\bf Case 3}: $s=r+1$. \\
Assume first that $n$ is even. Then $d_{\Gamma}(1,a^{n-i})=d_{\Gamma}(ba^{r+1},a^{n-i})$ when $1\le i\le n/2$, and $d_{\Gamma}(1,ba^{r-i})=d_{\Gamma}(ba^{r+1},ba^{r-i})$ when $0\le i\le n/2-1$. Hence,
\begin{align*}
W_{1(ba^{r+1})}(\Gamma) & = V_1-\{a^{n-1},a^{n-2},\ldots,a^{n/2}\}\,,\\
W_{(ba^{r+1})1}(\Gamma) & = V_2-\{ba^r,ba^{r-1},\ldots,ba^{r-n/2+1}\}\,,
\end{align*}
and thus $|W_{1(ba^{r+1})}(\Gamma)|=|W_{(ba^{r+1})1}(\Gamma)|$.

Assume second that $n$ is odd. Now we get $d_{\Gamma}(1,a^{n-i})=d_{\Gamma}(ba^{r+1},a^{n-i})$ when $1\le i\le (n-1)/2$, and $d_{\Gamma}(1,ba^{r-i})=d_{\Gamma}(ba^{r+1},ba^{r-i})$ when $0\le i\le (n-1)/2-1$. Consequently,
\begin{align*}
W_{1(ba^{r+1})}(\Gamma) & = V_1-\{a^{n-1},a^{n-2},\ldots,a^{(n+1)/2}\}\,,\\
W_{(ba^{r+1})1}(\Gamma) & = V_2-\{ba^r,ba^{r-1},\ldots,ba^{r-(n-1)/2+1}\}\,,
\end{align*}
which yields the required conclusion $|W_{1(ba^{r+1})}(\Gamma)|=|W_{(ba^{r+1})1}(\Gamma)|$.

\medskip\noindent
{\bf Case 4.} $s\not\in\{r,r-1,r+1\}$.\\
Our aim is to prove the following two claims.

\medskip\noindent
{\bf Claim A}: If $i\in [n]_0$, then $a^i\in W_{1(ba^s)}(\Gamma)$ if and only if $ba^{s+i}\in W_{(ba^s)1}(\Gamma)$.

\medskip\noindent
{\bf Claim B}: If $i\in [n]_0$, then $a^i\in W_{(ba^s)1}(\Gamma)$ if and only if $ba^{s+i}\in W_{1(ba^s)}(\Gamma)$.

\medskip
Proving Claims A and B, we will get a bijection between $W_{1(ba^s)}(\Gamma)$ and  $W_{(ba^s)1}(\Gamma)$, which will in turn yield the desired conclusion $|W_{1(ba^{s})}| = |W_{(ba^{s})1}|$.

Note that
\begin{align*}
W_{1(ba^s)}(\Gamma) & = \{a^i\mid d_{\Gamma_1}(1,a^i)<d_{\Gamma_1}(a^{r-s},a^i)+1\}\ \cup \\
&\quad\ \{ba^i\mid d_{\Gamma_2}(ba^r,ba^i)+1<d_{\Gamma_2}(ba^s,ba^i)\},\\
W_{(ba^s)1}(\Gamma) & = \{a^i\mid d_{\Gamma_1}(1,a^i)>d_{\Gamma_1}(a^{r-s},a^i)+1\}\ \cup\\
&\quad\ \{ba^i\mid d_{\Gamma_2}(ba^r,ba^i)+1>d_{\Gamma_2}(ba^s,ba^i)\}.
\end{align*}
The proof of Claims A and B is divided into the following four cases according to the value of $s$ and $i$.

\medskip\noindent
{\bf Case 4.1}: $s\in [r-1]_0$, $i\in [r-s+1]_0$.\\
If $a^i\in W_{1(ba^s)}(\Gamma)$, then $d_{\Gamma}(1,a^i)<d_{\Gamma}(ba^s,a^i)$.
We prove that $ba^{s+i}\in W_{(ba^s)1}(\Gamma)$.
The path $1,a,a^2,\ldots, a^i$ is a shortest $(1,a^i)$-path, while
$$ba^s, a^{r-s}, a^{r-s-1},\ldots, a^i\quad \mbox{or}\quad ba^s, a^{r-s},a^{r-s+1},\ldots, a^i$$
is a shortest $(ba^s,a^i)$-path. Here and later ``or'' refers that one of the two possibilities holds according to the value of $r$ and $s$.

Further, the path $ba^s, ba^{s+1}, \ldots, ba^{s+i}$ is a shortest $(ba^s,ba^{s+i})$-path, while the path
$$1, ba^r, ba^{r-1},\ldots, ba^{s+i}\quad {\rm or}\quad 1, ba^r, ba^{r+1},\ldots, ba^{s+i}$$ is a shortest $(1,ba^{s+i})$-path. It follows that $$d_{\Gamma}(ba^s,ba^{s+i})=d_{\Gamma}(1,a^i) \quad \mbox{and} \quad d_{\Gamma}(1,ba^{s+i})=d_{\Gamma}(ba^s,a^i).$$
So $d_{\Gamma}(ba^s,ba^{s+i})<d_{\Gamma}(1,ba^{s+i})$ and $ba^{s+i}\in W_{(ba^s)1}(\Gamma)$.

The above discussion also implies that if $ba^{s+i}\in W_{(ba^s)1}(\Gamma)$, then $a^i\in W_{1(ba^s)}(\Gamma)$.
That is,  $a^i\in W_{1(ba^s)}(\Gamma)$ if and only if $ba^{s+i}\in W_{(ba^s)1}(\Gamma)$, which demonstrates Claim A in this case.

If $a^i\in W_{(ba^s)1}(\Gamma)$, then $d_{\Gamma}(ba^s,a^i)<d_{\Gamma}(1,a^i)$.
We prove that $ba^{s+i}\in W_{1(ba^s)}(\Gamma)$. The path
$ba^s, a^{r-s},a^{r-s-1},\ldots, a^i$ is a shortest $(ba^s,a^i)$-path, and
$$1, a, a^2, \ldots, a^i\quad {\rm or}\quad 1,a^{n-1}, a^{n-2}, \ldots, a^i$$
is a shortest $(1,a^i)$-path. Further, $1, ba^r, ba^{r-1},\ldots, (ba^{s+i})$
is a shortest $(1,ba^{s+i})$-path, and
$$ba^s, ba^{s+1},\ldots, ba^{s+i} \quad {\rm or}\quad ba^s, ba^{s-1},\ldots,  ba^{s+i}$$
is a shortest $(ba^s,ba^{s+i})$-path. Hence, $$d_{\Gamma}(1,ba^{s+i}) = d_{\Gamma}(ba^s,a^i) \quad \mbox{and} \quad d_{\Gamma}(ba^s,ba^{s+i}) = d_{\Gamma}(1,a^i).$$
So $d_{\Gamma}(1,ba^{s+i})<d_{\Gamma}(ba^s,ba^{s+i})$ and $ba^{s+i}\in W_{1(ba^s)}(\Gamma)$.

The above discussion also yields that if $ba^{s+i}\in W_{1(ba^s)}(\Gamma)$, then $a^i\in W_{(ba^s)1}(\Gamma)$.
That is, $a^i\in W_{(ba^s)1}(\Gamma)$ if and only if $ba^{s+i}\in W_{1(ba^s)}(\Gamma)$. This proves Claim B in this case.

\medskip\noindent
{\bf Case 4.2}: $s\in [r-1]_0$, $r-s<i\le n-1$.\\
If $a^i\in W_{1(ba^s)}(\Gamma)$, then $d_{\Gamma}(1,a^i)<d_{\Gamma}(ba^s,a^i)$.
We prove that $ba^{s+i}\in W_{(ba^s)1}(\Gamma)$.
The path $1, a^{n-1}, \ldots, a^i$ is a shortest $(1,a^i)$-path, and
$$ba^s, a^{r-s}, a^{r-s+1}, \ldots, a^i \quad {\rm or}\quad ba^s, a^{r-s}, a^{r-s-1}, \ldots, a^i$$
is a shortest $(ba^s,a^i)$-path. In addition, the path $ba^s, ba^{s-1}, \ldots, ba^{s+i-n}$ is a shortest $(ba^s,ba^{s+i})$-path while
$$1, ba^r, ba^{r-1}, \ldots, ba^{s+i} \quad {\rm or}\quad 1, ba^r, ba^{r+1}, \ldots,  ba^{s+i}$$
is a shortest $(1,ba^{s+i})$-path. Consequently,
$$d_{\Gamma}(ba^s,ba^{s+i})=d_{\Gamma}(1,a^i) \quad \mbox{and} \quad
d_{\Gamma}(1,ba^{s+i})=d_{\Gamma}(ba^s,a^i).$$
So $d_{\Gamma}(ba^s,ba^{s+i})<d_{\Gamma}(1,ba^{s+i})$ and $ba^{s+i}\in W_{(ba^s)1}(\Gamma)$.

Again we also see that if $ba^{s+i}\in W_{(ba^s)1}(\Gamma)$, then $a^i\in W_{1(ba^s)}(\Gamma)$ and we can conclude that $a^i\in W_{1(ba^s)}(\Gamma)$ if and only if $ba^{s+i}\in W_{(ba^s)1}(\Gamma)$. This establishes Claim A in this case.

If $a^i\in W_{(ba^s)1}(\Gamma)$, then $d_{\Gamma}(ba^s,a^i)<d_{\Gamma}(1,a^i)$.
We prove that $ba^{s+i}\in W_{1(ba^s)}(\Gamma)$. The path
$ba^s, a^{r-s}, a^{r-s+1}, \ldots, a^i$
is a shortest $(ba^s,a^i)$-path, and
$$1, a, a^2, \ldots, a^i \quad {\rm or}\quad  1,a^{n-1}, a^{n-2}, \ldots, a^i$$
is a shortest $1,a^i$-path. Further, $1, ba^r, ba^{r+1}, \ldots, ba^{s+i}$
is a shortest $(1,ba^{s+i})$-path, and
$$ba^s, ba^{s+1}, \ldots, ba^{s+i} \quad {\rm or}\quad ba^s, ba^{s-1}, \ldots, ba^{s+i}$$
is a shortest $(ba^s,ba^{s+i})$-path. This means that $$d_{\Gamma}(1,ba^{s+i})=d_{\Gamma}(ba^s,a^i) \quad \mbox{and} \quad d_{\Gamma}(ba^s,ba^{s+i})=d_{\Gamma}(1,a^i).$$ Hence, $d_{\Gamma}(1,ba^{s+i})<d_{\Gamma}(ba^s,ba^{s+i})$ and $ba^{s+i}\in W_{1(ba^s)}(\Gamma)$.

Using the above discussion we also infer that if $ba^{s+i}\in W_{1(ba^s)}(\Gamma)$, then $a^i\in W_{(ba^s)1}(\Gamma)$. So $a^i\in W_{(ba^s)1}(\Gamma)$ if and only if $ba^{s+i}\in W_{1(ba^s)}(\Gamma)$ and Claim B is verified in this case.

\medskip\noindent
{\bf Case 4.3}: $r+2\le s\le n-1$, $i\in [r-s+n+1]_0$.\\
If $a^i\in W_{1(ba^s)}(\Gamma)$, then $d_{\Gamma}(1,a^i)<d_{\Gamma}(ba^s,a^i)$.
We prove that $ba^{s+i}\in W_{(ba^s)1}(\Gamma)$. The path
$1, a, a^2, \ldots, a^i$ is a shortest $(1,a^i)$-path, and
$$ba^s, a^{r-s}, a^{r-s-1},\ldots, a^i \quad {\rm or}\quad ba^s, a^{r-s}, a^{r-s+1}, \ldots, a^i$$
is a shortest $(ba^s,a^i)$-path. Furthermore, the path
$ba^s, ba^{s+1}, \ldots, ba^{s+i}$ is a shortest $(ba^s,ba^{s+i})$-path, and
$$1, ba^r, ba^{r-1}, \ldots, ba^{s+i} \quad {\rm or}\quad 1, ba^r, ba^{r+1}, \ldots, ba^{s+i}$$
is a shortest $(1,ba^{s+i})$-path. From this we deduce that $$d_{\Gamma}(ba^s,ba^{s+i})=d_{\Gamma}(1,a^i) \quad {\rm and}  \quad d_{\Gamma}(1,ba^{s+i})=d_{\Gamma}(ba^s,a^i).$$
So $d_{\Gamma}(ba^s,ba^{s+i})<d_{\Gamma}(1,ba^{s+i})$ and $ba^{s+i}\in W_{(ba^s)1}(\Gamma)$. We further get that if $ba^{s+i}\in W_{(ba^s)1}(\Gamma)$, then $a^i\in W_{1(ba^s)}(\Gamma)$. Hence $a^i\in W_{1(ba^s)}(\Gamma)$ if and only if $ba^{s+i}\in W_{(ba^s)1}(\Gamma)$ which establishes Claim A.

If $a^i\in W_{(ba^s)1}(\Gamma)$, then $d_{\Gamma}(ba^s,a^i)<d_{\Gamma}(1,a^i)$.
We prove that $ba^{s+i}\in W_{1(ba^s)}(\Gamma)$. The path
$ba^s, a^{r-s+n}, a^{r-s+n-1}, \ldots, a^i$
is a shortest $(ba^s,a^i)$-path, and
$$1, a, a^2, \ldots, a^i \quad {\rm or}\quad 1, a^{n-1}, a^{n-2},\ldots, a^i$$ is a shortest $(1,a^i)$-path. Moreover, the path
$1, ba^r, ba^{r-1}, \ldots, ba^{s+i}$
is a shortest $(1,ba^{s+i})$-path, and
$$ba^s, ba^{s+1}, \ldots, ba^{s+i} \quad {\rm or}\quad ba^s, ba^{s-1}, \ldots,  ba^{s+i}$$
is a shortest $(ba^s,ba^{s+i})$-path. Hence $$d_{\Gamma}(1,ba^{s+i})=d_{\Gamma}(ba^s,a^i) \quad {\rm and}  \quad d_{\Gamma}(ba^s,ba^{s+i})=d_{\Gamma}(1,a^i)$$ which in turn implies that $d_{\Gamma}(1,ba^{s+i})<d_{\Gamma}(ba^s,ba^{s+i})$ and $ba^{s+i}\in W_{1(ba^s)}(\Gamma)$. We also get that if $ba^{s+i}\in W_{1(ba^s)}(\Gamma)$ then $a^i\in W_{(ba^s)1}(\Gamma)$. That is, $a^i\in W_{(ba^s)1}(\Gamma)$ if and only if $ba^{s+i}\in W_{1(ba^s)}(\Gamma)$. Claim B follows in this case.

\medskip\noindent
{\bf Case 4.4}: $r+2\le s\le n-1$, $r-s+n<i\le n-1$.\\
If $a^i\in W_{1(ba^s)}(\Gamma)$, then $d_{\Gamma}(1,a^i)<d_{\Gamma}(ba^s,a^i)$.
We prove that $ba^{s+i}\in W_{(ba^s)1}(\Gamma)$. The path
$1, a^{n-1}, \ldots, a^i$ is a shortest $(1,a^i)$-path, and
$$ba^s, a^{r-s}, a^{r-s+1}, \ldots, a^i \quad {\rm or}\quad ba^s, a^{r-s}, a^{r-s-1}, \ldots, a^i$$
is a shortest $(ba^s,a^i)$-path. Next, $ba^s, ba^{s-1}, \ldots, ba^{s+i-n}$ is a shortest $(ba^s,ba^{s+i})$-path, and
$$1, ba^r, ba^{r-1}, \ldots, ba^{s+i} \quad {\rm or}\quad 1, ba^r, ba^{r+1}, \ldots,  ba^{s+i}$$
is a shortest $(1,ba^{s+i})$-path. Hence, $$d_{\Gamma}(ba^s,ba^{s+i})=d_{\Gamma}(1,a^i) \quad \mbox{and} \quad d_{\Gamma}(1,ba^{s+i})=d_{\Gamma}(ba^s,a^i).$$
So $d_{\Gamma}(ba^s,ba^{s+i})<d_{\Gamma}(1,ba^{s+i})$ and $ba^{s+i}\in W_{(ba^s)1}(\Gamma)$. We also get that if $ba^{s+i}\in W_{(ba^s)1}(\Gamma)$, then $a^i\in W_{1(ba^s)}(\Gamma)$.
That is to say, $a^i\in W_{1(ba^s)}(\Gamma)$ if and only if $ba^{s+i}\in W_{(ba^s)1}(\Gamma)$. Claim A follows.

If $a^i\in W_{(ba^s)1}(\Gamma)$, then $d_{\Gamma}(ba^s,a^i)<d_{\Gamma}(1,a^i)$.
We prove that $ba^{s+i}\in W_{1(ba^s)}(\Gamma)$.
The path $ba^s, a^{r-s}, a^{r-s+1}, \ldots, a^i$
is a shortest $(ba^s,a^i)$-path, and
$$1, a, a^2, \ldots, a^i \quad {\rm or}\quad 1, a^{n-1}, a^{n-2}, \ldots, a^i$$ is a shortest $(1,a^i)$-path. Next, $1, ba^r, ba^{r+1}, \ldots, ba^{s+i}$
is a shortest $(1,ba^{s+i})$-path, and
$$ba^s, ba^{s+1}, \ldots, ba^{s+i} \quad {\rm or}\quad ba^s, ba^{s-1}, \ldots, ba^{s+i}$$
is a shortest $(ba^s,ba^{s+i})$-path.
So, $$d_{\Gamma}(1,ba^{s+i})=d_{\Gamma}(ba^s,a^i) \quad  \mbox{and} \quad
d_{\Gamma}(ba^s,ba^{s+i})=d_{\Gamma}(1,a^i).$$
Hence, $d_{\Gamma}(1,ba^{s+i})<d_{\Gamma}(ba^s,ba^{s+i})$ and $ba^{s+i}\in W_{1(ba^s)}(\Gamma)$. Moreover, we also get that if $ba^{s+i}\in W_{1(ba^s)}(\Gamma)$, then $a^i\in W_{(ba^s)1}(\Gamma)$.
That is, $a^i\in W_{(ba^s)1}(\Gamma)$ if and only if $ba^{s+i}\in W_{1(ba^s)}(\Gamma)$. Claim B follows also in this case.
\end{proof}

It remains to prove Theorem~\ref{T:Cayley-cubic2}.

\begin{proof}[Proof of Theorem~\ref{T:Cayley-cubic2}]
Let $\Gamma_1=\Cay(D_{n};S_1)$, where $S_1=\{a,a^{n-1},ba^r\}$, and let $\Gamma_2=\Cay(D_{n};S_2)$, where $S_2=\{a^k,a^{n-k},ba^t\}$.
We will prove that $\Gamma_1$ is isomorphic to $\Gamma_2$.

Let $r$ be an integer such that $t=kr (\bmod) n$. Note that $r$ exists because we have assumed that $(k,n)=1$. Then $ba^t=ba^{kr}$.

Let $\theta:V(\Gamma_1)\to V(\Gamma_2)$ be a bijection defined by $\theta(a^i)=a^{ik}$ and $\theta(ba^i)=ba^{ik}$, $i\in [n]_0$.
Let  $\phi:E(\Gamma_1)\to E(\Gamma_2)$ be a bijection defined by $\phi(a^{i}a^{i+1}) = a^{ik}a^{(i+1)k}$,
$\phi((ba^{i})(ba^{i+1})) = (ba^{ik})(ba^{(i+1)k})$, and $\phi(a^{i}(ba^{r-i})) = a^{ik}(ba^{(r-i)k})$, $i\in [n]_0$.

For $i\in [n]_0$ we have
\begin{align*}
\phi(a^{i}a^{i+1}) & = a^{ik}a^{(i+1)k}=\theta(a^{i})\theta(a^{i+1})\,, \\
\phi((ba^{i})(ba^{i+1})) & = (ba^{ik})(ba^{(i+1)k})=\theta(ba^{i})\theta(ba^{i+1})\,, \\
\phi(a^{i}(ba^{r-i})) & = a^{ik}(ba^{(r-i)k})=\theta(a^{i})\theta(ba^{r-i})\,.
\end{align*}
This proves that $\Gamma_1$ and $\Gamma_2$ are isomorphic, hence Theorem~\ref{T:Cayley-cubic1} implies the result.
\end{proof}

%%%%%%%%%%%%%%%%%%%%
\section{Concluding remarks}
\label{S:conluding}
%%%%%%%%%%%%%%%%%%%%

Distance-balancedness of cubic Cayley graphs of dihedral groups remains to be considered for the other two types. More precisely:

\begin{problem}
Study the $\ell$-distance-balancedness of $\Cay(D_{n};\{a^{n/2},ba^{k_1},ba^{k_2}\})$ and of $\Cay(D_{n};\{ba^{k_1},ba^{k_2},ba^{k_3}\})$.
\end{problem}

Of course, we also have:

\begin{problem}
\label{prob:non-cubic}
Study the $\ell$-distance-balancedness of non cubic Cayley graphs of dihedral groups.
\end{problem}

With respect to Problem~\ref{prob:non-cubic} we point to the following example. The Cayley graph $\Cay(D_{9};\{a^3,a^6,b,ba^2,ba^3\})$ is of diameter $3$ and is neither $2$-distance-balanced nor $3$-distance-balanced, see~\cite[Fig.~2]{Miklavic:2018}.

More generally, we also pose:

\begin{problem}
Study the $\ell$-distance-balancedness of cubic Cayley graphs of groups except dihedral groups.
\end{problem}

With respect to the last problem, consider the next examples. The Cayley graph $\Cay(A_4;S)$, where $S=\{(1\ 2\ 3),(1\ 3\ 2),(1\ 2)(3\ 4)\}$, is a cubic graph of diameter $3$. Surprisingly, it is $1$-distance-balanced, $3$-distance-balanced, but it is not $2$-distance-balanced. As another example consider $\Cay(S_4;\{(1\ 2),(2\ 4),(1\ 2)(3\ 4)\})$, which is a cubic graph of diameter $4$. However, this Cayley graph is $1$-distance-balanced, $2$-distance-balanced, but it is neither $3$-distance-balanced nor $4$-distance-balanced, see~\cite[Fig.~1]{Miklavic:2018}.

\section*{Acknowledgments}

Gang Ma is supported by the Natural Science Foundation of Shandong Province (ZR2022MA077).
Jianfeng Wang is supported by the NSFC (12371353),  the Special Fund for the Taishan Scholars Project and the IC Program of Shandong Institutions of Higher Learning IC Program for Innovative Young Talents, and the Shandong Province Postgraduate Education Reform Project (SDYKC2023107) and Undergraduate Teaching Reform Research Project of Shandong (Z2022027).
Guang Li is supported by the NSFC (12301449) of China.
Sandi Klav\v{z}ar was supported by the Slovenian Research Agency (ARIS) under the grants P1-0297, N1-0355, and N1-0285.

%\section*{Conflict of interest statement}
%
%On behalf of all authors, the corresponding author states that there is no conflict of interest.
%
%
%\section*{Data availability statement}
%
%Data sharing not applicable to this article as no datasets were generated or analysed during the current study.

\end{document}